\numberwithin{equation}{section}
\theoremstyle{plain}
\newtheorem{Th}{Theorem}[section]
\newtheorem{Prop}[Th]{Proposition}
\theoremstyle{definition}
\newtheorem{Rem}[Th]{Remark}
\newtheorem{?}[Th]{Problem}
\providecommand{\num}[1]{\nu\left(#1\right)}
\providecommand{\norm}[1]{\left\lVert#1\right\rVert}
\providecommand{\parenth}[1]{\left(#1\right)}
\providecommand{\conv}[1]{\mathop{\rm conv}\{#1\}}
\newcommand{\R}{\mathbb{R}} 
\newcommand{\Pro}{\mathbb{P}} 
\newcommand{\Sp}{\mathbb{S}} 
\newcommand*{\ind}{\mbox{$\mathds{1}$}} 
\begin{document}

\title[Stochastic forms of Brunn's principle]{Stochastic forms of Brunn's principle}

\author[P. Pivovarov \& J. Rebollo Bueno]{Peter Pivovarov \& Jes\'us
  Rebollo Bueno}

\address{University of Missouri \\ Department of Mathematics
  \\ Columbia MO 65201}

\email{pivovarovp@missouri.edu \& jrc65@mail.missouri.edu}



\begin{abstract}
  A number of geometric inequalities for convex sets arising from
  Brunn's concavity principle have recently been shown to yield local
  stochastic formulations.  Comparatively, there has been much less
  progress towards stochastic forms of related functional
  inequalities.  We develop stochastic geometry of $s$-concave
  functions to establish local versions of dimensional forms of
  Brunn's principle \`{a} la Borell, Brascamp-Lieb, and Rinott.  To do
  so, we define shadow systems of $s$-concave functions and revisit
  Rinott's approach in the context of multiple integral rearrangement
  inequalities.
\end{abstract}

\maketitle

\section{Introduction}
\label{intro}

Brunn's concavity principle underpins a wealth of inequalities in
geometry and analysis. One can formulate it as follows: for any convex
body $K\subseteq \mathbb{R}^n$ and any direction $\theta$, the
$(n-1)$-volume of slices of $K$ by parallel translates of
$\theta^{\perp}$ is $1/(n-1)$-concave on its support, i.e.,
\begin{equation}
  \label{eqn:Brunn}
 A(t) = \lvert K\cap (\theta^{\perp}+t\theta)\rvert^{1/(n-1)}
\end{equation}
 is concave.  A far-reaching extension of this principle in analysis
is exemplified by a family of functional inequalities obtained by
Borell \cite{Bor:1975-Convex} and Brascamp-Lieb
\cite{bralieonextension}, with an alternate approach
put forth by Rinott \cite{rinonconvexity}.  These inequalities can be
formulated in terms of certain means as follows: for $a,b\geq 0$,
$s\geq-1/n$ and $\lambda\in (0,1)$, set
    \begin{equation*}
    \mathcal{M}_{\lambda}^s(a,b) =
    \begin{cases}
      \parenth{\lambda a^s + (1-\lambda)b^s}^{1/s} & \text{ if } ab\neq0 \\
      0 & \text{ if }  ab=0,
    \end{cases}
    \end{equation*}
where the cases $s\in\{-1/n,0,+\infty\}$ are defined as limits
    \begin{align*}
      \mathcal{M}_{\lambda}^{-1/n}(a,b) & =\min\{a,b\},
      &
      \mathcal{M}_{\lambda}^{+\infty}(a,b) & =\max\{a,b\},
      &
      \mathcal{M}_{\lambda}^0(a,b) & =a^{\lambda}b^{1-\lambda}.
    \end{align*}
    Then for measurable functions $f,g,h:\R^n\to[0,\infty)$ ,
      $0<\lambda<1$, and $s\geq-1/n$, if
    \begin{equation}    \label{BLL_assumption}
        h(\lambda x_1+(1-\lambda)x_2)
        \geq
        \mathcal{M}_{\lambda}^s(f(x_1),g(x_2))
    \end{equation}
for all $x_1,x_2\in\R^n$, one has
    \begin{equation}
    \label{BBL}
        \displaystyle\int_{\R^n} h
        \geq
        \mathcal{M}_{\lambda}^{s/(1+ns)}
        \parenth{
        \int_{\R^n} f, \int_{\R^n} g
        }.
    \end{equation}
The Pr\'{e}kopa-Leindler inequality \cite{leiconv,preonlog1,preonlog2}
corresponds to the logarithmically concave case $s=0$; for earlier
work on the real line, see Henstock and Macbeath
\cite{HenMac:1953-1dimBLL}.  These principles are now fundamental in
analysis, geometry and probability, among other fields.  For
their considerable impact, we refer the reader to \cite{garbru} and
the references therein.

The inequalities \eqref{BBL} stem from principles rooted in convexity.
Indeed, the standard approach to Brunn's principle \eqref{eqn:Brunn}
connects concavity of the map $t\mapsto A(t)$ to the convexity of $K$
through suitable symmetrizations, see e.g., \cite{AGM}.  In
\cite{braliebestconstants}, Brascamp and Lieb used symmetrization to
prove certain cases of \eqref{BBL}. Subsequently, they provided an
alternate inductive approach, based on the Brunn-Minkowski inequality
\cite{bralieonextension}.  Rinott provided an alternate proof,
starting with epigraphs of convex functions \cite{rinonconvexity}.
However, the inequalities ultimately do not require convexity as they
hold for measurable functions. So convexity (or concavity) of the
functions involved seems of no importance.  In this paper, our focus
is on what more can be said when the functions involved do possess
some concavity.

Our motivation stems from recent work for convex sets in which a
``local'' stochastic dominance accompanies an isoperimetric
principle. A concrete example, which can be derived via Brunn's
principle, is the Blaschke-Santal\'{o} \cite{San:1949} inequality. The
latter says that the volume of the polar of an origin-symmetric convex
body $K$ is maximized by a Euclidean ball $B$ under a constraint of
equal volume.  Proofs via symmetrization depend on variants of
\eqref{BBL}, e.g., Meyer-Pajor \cite{meypajblaschke} and Campi-Gronchi
\cite{campigronchionvolume}. In \cite{CFPP}, the first-named author,
together with Cordero-Erausquin, Fradelizi and Paouris, proved a
stochastic version in which the dominance applies ``locally'' to
random polytopes that naturally approximate $K$ and $B$ from within.
By repeated sampling, this recovers the Blaschke-Santal\'{o}
inequality by the law of large numbers.  This example is indicative of
recent research on isoperimetric inequalities: when an isoperimetric
principle for convex sets can be proved by symmetrization, it is
fruitful to instead carry out the symmetrization on product
probability spaces.  Multiple integral rearrangement inequalities of
Rogers \cite{Rog:1957-Rearrangement}, Brascamp-Lieb-Luttinger
\cite{BLL}, and Christ \cite{christestimates} then enter the picture
and yield stronger stochastic formulations.  This builds on principles
in stochastic geometry e.g., \cite{Bus:1953,camcolgroanote,Gro:1974};
see \cite{CFPP,paopivprob,paopivrand,RB:2020-Planar} for further
background.

While there is much work on stochastic isoperimetric inequalities for
convex sets, there are far fewer results about random functions.  In
\cite{PivRB:2019-PreLei}, we initiated work on the
Pr\'{e}kopa-Leindler inequality for random $\log$-concave functions.
Here we will show that the full family of inequalities \eqref{BBL}
actually have ``local'' stochastic strengthenings for functions $f$
that are $s$-concave, i.e. $f^s$ is concave on its support; when
$s<0$, this means that $f^s$ is convex.  To formulate our main result,
for each $s$-concave function $f$, we sample independent random
vectors $(X_1,Z_1),\ldots,(X_N,Z_N)$ distributed uniformly under the
graph of $f$ according to Lebesgue measure. We associate random
functions $[f]_N$, supported on the convex hull $\mathop{\rm
  conv}\{X_1,\ldots,X_N\}$, defined by
    \begin{subequations}
    \begin{align*}
        [f]_N(x)
        &=
        \begin{cases}
            \inf\{z^{1/s}:(x,z)\in \mathcal{P}_{f,N}\}, & \text{if $s<0$} \\[.2em]
            \sup\{e^{z}:(x,z)\in \mathcal{P}_{f,N}\}, & \text{if $s=0$}, \\[.2em]
            \sup\{z^{1/s}:(x,z)\in \mathcal{P}_{f,N}\}, & \text{if $s>0$},
        \end{cases}
\intertext{where}
        \mathcal{P}_{f,N}
        &=
        \begin{cases}
            {\rm
              conv}\{(X_1,{Z^s_1}),\ldots,(X_N,{Z^s_N})\}, & \text{if $s\neq0$}. \\[.2em]
            {\rm
              conv}\{(X_1,\log{Z_1}),\ldots,(X_N,\log{Z_N})\}, & \text{if $s=0$}. 
        \end{cases}
    \end{align*}
    \end{subequations}
    In other words, when $s=0$ or $s>0$, $[f]_N$ is the least
    $\log$-concave or $s$-concave function, respectively, satisfying
    $[f]_N(X_i)\geq Z_i$; similarly, when $s<0$, $[f]_N$ is the
    greatest $s$-concave function with $[f]_N(X_i)\leq Z_i$. See
    Figure \ref{fig:log} for the case $s=0$.

\begin{figure}
  \label{fig:log}
\includegraphics[scale=0.9]{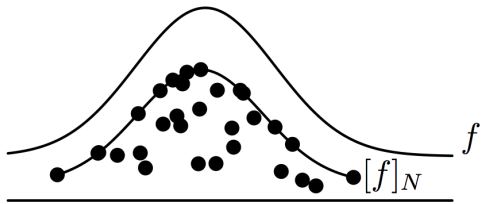}
\caption{Stochastic approximation of a $\log$-conave function $f$ by
  its least $\log$-concave majorant $[f]_N$ above a random sample
  under the graph of $f$.}
\end{figure} 

With this notation, we can state our main result, which we formulate
in terms of the $\sup$-convolution
    \begin{equation*}
      (f\star_{\lambda,s} g)(v) = \sup\{
      \mathcal{M}_{\lambda}^s(f(x_1),g(x_2)): v=\lambda x_1+(1-\lambda)x_2 \}
    \end{equation*}
    and the symmetric decreasing rearrangements $f^*$ and $g^*$ of $f$
    and $g$, respectively (defined in \eqref{rearrangement-def}).

\begin{Th}
\label{Extended-Stochastic-PL}
  Let $s\in(-1/n,\infty)$ and let $f,g:\R^n\to[0,\infty)$ be
    integrable $s$-concave functions and $N,M>n+1$. Then for
    $\alpha>0$,
  \begin{equation*}
    \Pro\parenth{\int_{\R^n}([f]_N\star_{\lambda,s}[g]_M)(v)dv
    >\alpha} \geq
    \Pro\parenth{\int_{\R^n}([f^*]_N\star_{\lambda,s}[g^*]_M)(v)dv
    >\alpha}.
  \end{equation*}
\end{Th}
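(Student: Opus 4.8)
\emph{Plan.} I would prove the theorem by reducing it to a single Steiner symmetrization and then combining the convexity of volume along shadow systems (Shephard) with Anderson's inequality; this is the ``\`a la Rinott'' route through multiple integral rearrangement mentioned in the introduction.

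\emph{A convex-body reformulation.} For $s$-concave $h$ let $\Omega_h\subseteq\R^n\times\R$ be the convex lift $\{(x,w):0\le w\le h(x)^s\}$ if $s>0$, $\{(x,w):w\le\log h(x)\}$ if $s=0$, and $\{(x,w):w\ge h(x)^s\}$ if $s<0$. Unwinding the definition of $\mathcal{M}_{\lambda}^s$ gives the identity $\Omega_{f\star_{\lambda,s}g}=\lambda\Omega_f+(1-\lambda)\Omega_g$. Applying it with $h=[f]_N$ and $h=[g]_M$ --- whose lifts share the relevant boundary with the random polytopes $\mathcal{P}_{f,N},\mathcal{P}_{g,M}$ --- and invoking the layer-cake formula, one obtains
\begin{equation*}
  \int_{\R^n}\parenth{[f]_N\star_{\lambda,s}[g]_M}(v)\,dv
  =\Psi_s\parenth{\lambda\mathcal{P}_{f,N}+(1-\lambda)\mathcal{P}_{g,M}},
  \qquad
  \Psi_s(P):=\int\abs{P\cap\{w=u\}}\,\rho_s(u)\,du,
\end{equation*}
where $\rho_s$ is the positive weight $u\mapsto\abs{s}^{-1}u^{1/s-1}$ ($s\ne0$) or $u\mapsto e^u$ ($s=0$), and the assumption $s>-1/n$ makes the integral finite. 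Here $\mathcal{P}_{f,N}=\conv{(X_1,W_1),\dots,(X_N,W_N)}$, where $(X_i,W_i)$ is the lift of the uniform-under-graph point $(X_i,Z_i)$; the $(X_i,W_i)$ are i.i.d.\ with density proportional to $\rho_s(w)\ind_{\Omega_f}$.

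\emph{Reduction to one Steiner symmetrization, and the Steiner step.} Taking iterated Steiner symmetrizations $f^{(k)}=S_{\theta_k}\cdots S_{\theta_1}f\to f^*$ (and likewise $g^{(k)}\to g^*$), the random polytopes $\mathcal{P}_{f^{(k)},N}$ converge in law to $\mathcal{P}_{f^*,N}$ and $\Psi_s$ is continuous along this convergence, so the laws of the integrals converge and, the assertion being one about tail probabilities, it passes to the limit. Hence it suffices to prove, for one direction $\theta\in\R^n$ (perpendicular to the $w$-axis), that $\Pro\parenth{\int([f]_N\star_{\lambda,s}[g]_M)>\alpha}\ge\Pro\parenth{\int([S_\theta f]_N\star_{\lambda,s}[S_\theta g]_M)>\alpha}$. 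Split coordinates as $\theta^\perp\oplus\R\theta$ and write a lifted sample point as $(Z_i,U_i)$. Because $w\in\theta^\perp$ and $S_\theta\Omega_f$ has the same $\theta$-section lengths as $\Omega_f$, the $\theta^\perp$-marginal of the sample is the \emph{same} for $f$ and $S_\theta f$, while conditionally on $Z_i=z$ the coordinate $U_i$ is uniform on the $\theta$-section $[a(z),b(z)]$ for $f$ and on the centred interval $[-r(z),r(z)]$, $r(z)=\tfrac12(b(z)-a(z))$, for $S_\theta f$. Condition on all the $Z_i$'s and $Z'_j$'s; the integral becomes $F(\vec U,\vec U')$ with $F(\vec a,\vec a')=\Psi_s\parenth{\lambda\conv{(Z_i,a_i)}+(1-\lambda)\conv{(Z'_j,a'_j)}}$. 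As $(\vec a,\vec a')$ runs along a line, the vertices of both polytopes --- hence of their Minkowski combination --- shift affinely in direction $\theta$, i.e.\ trace a shadow system; slicing at a fixed height $u$ gives a shadow system of polytopes in $\R^n$, whose volume is convex along the system by Shephard's theorem, so integrating against the positive weight $\rho_s$ shows $F$ is convex; reflecting the $\theta$-coordinate, a volume-preserving symmetry of every horizontal slice, shows $F$ is even. Recentering with $m_i=\tfrac12(a(Z_i)+b(Z_i))$ (and $m'_j$ likewise), the conditional integral for $f$ has the law of $F(\vec{\tilde U}+\vec m,\vec{\tilde U}'+\vec m')$ while that for $S_\theta f$ is $F(\vec{\tilde U},\vec{\tilde U}')$, where $(\vec{\tilde U},\vec{\tilde U}')$ is uniform on the origin-symmetric box $\prod_i[-r_i,r_i]\times\prod_j[-r'_j,r'_j]$. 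Since $F$ is even and convex, $\{F\le\alpha\}$ is an origin-symmetric convex set, so Anderson's inequality for the (symmetric, log-concave) uniform law on the box yields $\Pro\parenth{F(\vec{\tilde U}+\vec m,\vec{\tilde U}'+\vec m')\le\alpha}\le\Pro\parenth{F(\vec{\tilde U},\vec{\tilde U}')\le\alpha}$, i.e.\ the conditional form of the desired dominance; averaging over the $Z$'s and $Z'$'s finishes the Steiner step, and with it the theorem.

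\emph{Main obstacle.} The crux is the convexity of $F$ along the $\theta$-shadow system: this is exactly where Rinott's slicing and Shephard's shadow-system volume inequality do the work, and where the three regimes $s>0$, $s=0$, $s<0$ must be handled on the same footing --- one must in particular check that for $s<0$ the boundary of $\mathcal{P}_{f,N}$ used to define $[f]_N$ interacts correctly with horizontal slicing, so that both the identity above and the shadow-system structure persist (here the lift $\Omega_h$ is unbounded, which also demands care in the continuity/weak-convergence argument underlying the reduction to $f^*$ and $g^*$).
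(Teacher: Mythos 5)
Your proposal is correct in its geometric core and that core coincides with the paper's: you lift $s$-concave functions to epigraphs/hypographs weighted by $\rho_s$ (the paper's measure $\nu$ in \eqref{Integral-Measure}), identify $\int [f]_N\star_{\lambda,s}[g]_M$ with the weighted measure of a Minkowski combination of the lifted random sets (the paper's identities \eqref{Convexity-s-Epigraph}--\eqref{Convexity-Hypograph} and \eqref{Eqn-Int-Rays-Tilde}--\eqref{Eqn-Int-Rays-s}), and prove that, with heights frozen, this functional of $(x_1,\dots,x_{N+M})$ is even and convex along lines in a direction $\theta$ by slicing horizontally and invoking the Rogers--Shephard convexity of volume along shadow/linear parameter systems (the paper's Propositions \ref{prop:lps_EH}, \ref{SC-of-rays-with-fixed-heights-epi}, \ref{SC-of-rays-with-fixed-height-hypo}). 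Where you genuinely diverge is the last step: the paper feeds the Steiner-convex kernel $\ind_{\{F>\alpha\}}$ and the sliced indicators $\ind_{\{k_i\geq z_i\}}$ directly into Christ's form of the Brascamp--Lieb--Luttinger inequality (Theorem \ref{RBLLC}) and is done in one stroke, whereas you inline a proof of that inequality for this particular kernel: condition on the $\theta^{\perp}$-data, observe the conditional law of the $\theta$-coordinates is uniform on a product of intervals, recenter, and apply Anderson's inequality to the origin-symmetric convex sublevel set $\{F\leq\alpha\}$, then iterate Steiner symmetrizations to reach $f^*,g^*$. This buys self-containedness at the cost of two technical debts the citation avoids: (i) the convergence $f^{(k)}\to f^*$ of iterated Steiner symmetrizations and the passage of the tail probabilities to the limit (convergence in distribution plus handling atoms of the distribution function) is asserted, not proved, and for $s<0$ the unbounded epigraphs make the continuity of $\Psi_s$ a point that really does need the argument you only flag; (ii) one must check measurability/integrability throughout the conditioning.

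One local error to repair: the displayed identity $\int([f]_N\star_{\lambda,s}[g]_M)=\Psi_s\bigl(\lambda\mathcal{P}_{f,N}+(1-\lambda)\mathcal{P}_{g,M}\bigr)$ is false with $\mathcal{P}_{f,N}$ the convex hull of the lifted points: the horizontal slices of $\mathcal{P}_{f,N}$ are cut off by the lower (resp.\ upper) convex envelope as well, so $\Psi_s(\mathcal{P}_{f,N})\neq\int[f]_N$ and the Minkowski-sum identity for $\star_{\lambda,s}$ also fails at the level of the $\mathcal{P}$'s. You must work with the full lifts --- the convex hulls of the rays $R_{\rho_i}(X_i)$ or segments $\widetilde{R}_{\rho_i}(X_i)$, i.e.\ the upward/downward closures of $\mathcal{P}_{f,N}$, exactly as in \eqref{Random-Epigraphs}--\eqref{Random-Hypograph} --- for both the layer-cake identity and the shadow-system slicing. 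Since these closures are again generated by points translating affinely in the direction $\theta$, the rest of your argument goes through unchanged once this substitution is made.
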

\noindent When $N, M\rightarrow \infty$ one gets
    \begin{equation}
    \label{Isoperimetric-Prekopa-Leindler}
        \displaystyle\int_{\R^n}{(f\star_{\lambda,s} g)(v)}dv
        \geq
        \int_{\R^n}{(f^*\star_{\lambda,s} g^*)(v)}dv.
    \end{equation}
As mentioned, Brascamp and Lieb's first approach to cases of
\eqref{BBL} used rearrangements. Recent interest in rearranged
strengthenings for this and other means in \eqref{BBL} have been
studied by Melbourne \cite{mel} for general functions. Roysdon and
Xing have studied $L_p$ variants of the Borell-Brascamp-Lieb
inequality \cite{RoysXing}.  Our treatment will also allow for other
means (see Remark \ref{rem:C}).  We focus on $s$-concave functions
because in this case there is a stronger local stochastic dominance.

Special cases of Theorem \ref{Extended-Stochastic-PL}, namely
$s=1/q$ ($q\in \mathbb{N})$ were treated in
\cite{PivRB:2019-PreLei}.  The approach used multiple integral
rearrangement inequalities (as discussed above) and, additionally,
built on ideas of Artstein-Avidan, Klartag and Milman
\cite{artklamilsan} on moving from convex sets to $\log$-concave
functions.  Here a new key step is inspired by Rinott's approach to
\eqref{BBL} via epigraphs of convex functions \cite{rinonconvexity};
the latter has recently been used in a dual setting by
Artstein-Avidan, Florentin and Segal \cite{AFS:2019-PolarPL} for a new
Pr\'{e}kopa-Leindler inequality.

Another new tool developed in this paper is that of linear parameter
systems for $s$-concave functions.  Linear parameter systems along a
direction $\theta\in\Sp^{n-1}$ are families of convex sets of the form
\begin{equation*}
  K_t = \conv{x_i+\lambda_i t\theta:i\in I}, \hspace{1em} t\in [a,b],
\end{equation*}
 where $I$ is an index set, $\{x_i\}_{i\in I}\subseteq\R^n$ and
 $\{\lambda_i\}_{i\in I}\subseteq \mathbb{R}$ are bounded sets.
 Rogers and Shephard \cite{rogsheext} proved the fundamental fact that
 the volume of $K_t$ is a convex function of $t$. This was extended to
 the more general notion of shadow systems by Shephard
 \cite{Shephard:1964shadow}. Shadow systems encorporate key features
 of Steiner symmetrization and have been successfully used in a
 variety of isoperimetric type inequalities, developed especially by
 Campi and Gronchi, e.g.,
 \cite{campigronchionvolume,Cam-Gron:2006volume}; for other examples,
 see \cite{fradelizi2012application,saroglou2013shadow,
   meyreishadow,weberndorfer2013shadow} or \cite{Schneider-Book} and
 the references therein.

The proof of Theorem \ref{Extended-Stochastic-PL} relies on extending
linear parameter systems to $s$-concave functions. Let $I$ be an index
set, and $\{(x_i+\lambda_i t\theta,z_i)\}_{i\in I}\subseteq
\mathbb{R}^n\times[0,\infty)$ a collection of points lying under the
  graph of an integrable $s$-concave function and set
  $w_i(t)=x_i+\lambda_i t\theta$, $i\in I$. Analogous to the
  definition of $[f]_N$, we define, for $s\geq 0$, $f_{t,s}$ to be the
  least $s$-concave function above $\{w_i(t)\}$; similarly, for $s<0$,
  we define $f_{t,s}$ to be the greatest $s$-concave function beneath
  the points $\{w_i(t)\}$ (see \S \ref{section:shadow}). In this setting, we show that
    \begin{equation*}
    t\mapsto \int_{\mathbb{R}^n}f_{t,s}(v)dv
  \end{equation*} is convex. 
Just as linear parameter systems can be viewed as special shadow
systems, the same applies to $s$-concave functions. We also give an
interpretation of shadow systems of $s$-concave functions in terms of
associated epigraphs and hypographs and establish related convexity
properties in \S \ref{section:shadow}.  We show in \S
\ref{Steiner-Convexity} how these interface with rearrangement
inequalities and thus provide a path towards stochastic geometry of
$s$-concave functions and associated extremal inequalities.

\section{Preliminaries}
\label{prelim}

We will denote by $\{e_1,\ldots,e_n\}$ the standard basis in
$\R^n$. Let $K$ be a compact, convex set in $\mathbb{R}^n$, $\theta$ on the
unit sphere $\Sp^{n-1}$ and $P:=P_{\theta^{\perp}}$ the orthogonal
projection onto $\theta^{\perp}$. We define $u_K:PK\rightarrow \R$ by
\begin{equation*}
  \label{upperfunction}
u_K(x):=u(K,x):= \sup\{\lambda:x+\lambda\theta\in K\}
\end{equation*}
and $\ell_K:PK\rightarrow \R$ by
\begin{equation*}
  \label{lowerfunction}
\ell_K(x):=\ell(K,x):=\inf\{\lambda:x+\lambda\theta\in K\}.
\end{equation*}
Notice that $u_K$ and $\ell_K$ are concave and
convex, respectively.

We recall that the Steiner symmetral of a non-empty compact set
$A\subseteq\R^n$ with respect to $\theta^{\perp}$,
$S_{\theta^{\perp}}A$, is the set with the property that for each line
$l$ orthogonal to $\theta^{\perp}$ and meeting $A$, the set
$l\cap S_{\theta^{\perp}}A$ is a closed segment with midpoint on
$\theta^{\perp}$ and length equal to that of the set $l\cap A$. The
mapping $S_{\theta^{\perp}}:A\rightarrow S_{\theta^{\perp}}A$ is
called the Steiner symmetrization of $A$ with respect to
$\theta^{\perp}$. In particular, if $K$ is a convex body
\begin{equation*}
	S_{\theta^{\perp}}K = \{x+\lambda \theta:x\in PK,
        -\dfrac{u_K(x)-\ell_K(x)}{2} \leq \lambda \leq
        \dfrac{u_K(x)-\ell_K(x)}{2}\}.
\end{equation*}
This shows that $S_{\theta^{\perp}}K$ is convex, since the function
$u_K-\ell_K$ is concave. Moreover, $S_{\theta^{\perp}}K$ is symmetric with
respect to $\theta^{\perp}$, it is closed, and by Fubini's theorem it
has the same volume as $K$.

Let $A\subseteq\R^n$ be a Borel set with finite Lebesgue measure. The
symmetric rearrangement, $A^*$, of $A$ is the open ball with center at
the origin whose volume is equal to the measure of $A$. Since we
choose $A^*$ to be open, $\ind_{A^*}$ is lower semicontinuous. The
symmetric decreasing rearrangement of $\ind_A$ is defined by
$\ind^*_A=\ind_{A^*}$. We say a Borel measurable function
$f:\R^n\to[0,\infty)$ vanishes at infinity if for every $t>0$, the set
  $\{x\in\R^n:f(x)>t\}$ has finite Lebesgue measure. In such a case,
  the symmetric decreasing rearrangement $f^*$ is defined by
  \begin{equation}
    \label{rearrangement-def}
    f^*(x) = \int_0^{\infty}{\ind^*_{\{f>t\}}(x)}dt =
    \int_0^{\infty}{ \ind_{\{f>t\}^*}(x)}dt.
  \end{equation}
Observe that $f^*$ is radially symmetric, radially decreasing, and
  equimeasurable with $f$, i.e., $\{f>t\}$ and $\{f^*>t\}$ have the
  same measure for each $t>0$. Let $\{e_1,\ldots,e_n\}$ be an
  orthonormal basis of $\R^n$ such that $e_1=\theta$. Then, for $f$
  vanishing at infinity, the Steiner symmetral $f(\cdot|\theta)$ of
  $f$ with respect to $\theta^{\perp}$ is defined as follows: set
  $f_{(x_2,\ldots,x_n),\theta}(t)=f(t,x_2,\ldots,x_n)$ and define
  $f^*(t,x_2,\ldots,x_n|\theta):=(f_{(x_2,\ldots,x_n),\theta})^*(t)$. In
  other words, we obtain $f^*(\cdot|\theta)$ by rearranging $f$ along
  every line parallel to $\theta$. We refer to the books
  \cite{lielosanalysis,simconvexity} or the introductory notes
  \cite{burashort} for further background material on rearrangement of
  functions.

%
%
%
%
%
%
%
%
%
%

\section{Rinott's approach to $s$-concave functions via epigraphs and hypographs}
\label{Epi-con}

Rinott \cite{rinonconvexity} provides a geometric proof of the
Borell-Brascamp-Lieb inequalities \eqref{BBL} by deriving integral
inequalities for functions using certain higher-dimensional
measures. We start this section by recalling his approach.

A function $f:\R^n\to[0,\infty)$ is called $\log$-concave if $\log f$
  is concave on its support. We also use the notion of {\it
    $s$-concavity} as in \cite{Bor:1975-Convex} meaning that $f$ is
  $s$-concave if $f^{s}$ is concave on its support; this differs from
  other uses of this term
  \cite{artklamilsan,klamarg,PivRB:2019-PreLei}. Any $s$-concave
  function, for $s>0$, is also log-concave. For $A\subseteq\R^n$, we
  define the epigraph of $f$ on $A$ in $\R^n$ by
    \begin{equation*}
	   E_A(f) = \{ (x,z)\in A\times\R: f(x)\leq z \}.
    \end{equation*}
Analogously we define the hypograph of $f$ on $A$ by
    \begin{equation*}
	   H_A(f) = \{ (x,z)\in A\times[0,\infty): f(x)\geq z \}.
    \end{equation*}
    When we omit the subscript $A$, we assume that $A$ is the support
    of $f$.

Let $f:\R^n\to[0,\infty)$ be an $s$-concave function for
  $s\in(-1/n,\infty)$, and $\nu$ a measure on $\R^n\times\R$ such that
    \begin{equation}
    \label{Nu-Measure}
        d\nu(x_1,\ldots,x_{n+1}) = h(x_{n+1})\,dx_1\cdots dx_{n+1},
    \end{equation}
for some continuous function $h:\R\to\R$. With this setup, we
  can express the integral of $f$ in terms of the $\nu$-measure of the
  epigraph or hypograph of a transformation of it. Moreover,
    \begin{equation}
    \label{Integral-Measure}
        \int_A{f(x)}dx
        =
        \begin{cases}
            \nu(E_A(f^s)), & \text{for } h(x_{n+1})=-\frac{1}{s}\,x_{n+1}^{\frac{1}{s}-1}, \text{if $s<0$}. \\[0.5em]
            \nu(E_A(-\log{f})), & \text{for } h(x_{n+1})=e^{-x_{n+1}}, \hspace{.9em}\text{if $s=0$}. \\[0.5em]
            \nu(H_A(f^s)), & \text{for } h(x_{n+1})=\frac{1}{s}\,x_{n+1}^{\frac{1}{s}-1}, \hspace{.9em}\text{if $s>0$}.
        \end{cases}
    \end{equation}
Notice the $s$-concavity of $f$ implies the convexity of $H_A(f^s)$, $E_A(-\log{f})$, and $E_A(f^s)$ respectively.

For $x_1,x_2\in \mathbb{R}^n$ and $\lambda\in[0,1]$, let
$x_1+_{\lambda}x_2=\lambda x_1+(1-\lambda)x_2$; similarly for $K,L\subseteq
\mathbb{R}^n$, we write
\begin{equation}
        \label{eqn:plus_lambda}
            K+_{\lambda} L := \lambda K + (1-\lambda)L.
\end{equation}
For convex functions $\varphi,\psi:\R^n\to\R$ and $\lambda\in [0,1]$,
we define their infimal convolution by
\begin{equation*}
(\varphi\square_{\lambda}\psi)(v) =
\underset{v={x_1}+_{\lambda}x_2}{\inf}\{\mathcal{M}_{\lambda}^1\parenth{\varphi(x_1),\psi(x_2)}\},
\end{equation*}
so that
\begin{equation*}
  E(\varphi\square_{\lambda}\psi) = E(\varphi)+_{\lambda}E(\psi).
\end{equation*}
Let $f,g:\R^n\to[0,\infty)$ be $s$-concave functions.  When $s<0$,
  $f^s$ and $g^s$ are convex and
\begin{eqnarray*}
    	(f\star_{\lambda,s}g)^s(v) &=&
  \left(\underset{v={x_1}+_{\lambda}x_2}{\sup}
  \mathcal{M}_{\lambda}^1(f^s(x_1),g^s(x_2))^{1/s}\right)^s \\ &=&
  \underset{v={x_1}+_{\lambda}x_2}{\inf}\mathcal{M}_{\lambda}^1\parenth{{f}^s(x_1),g^s(x_2)}\\ &
  =&(f^s\square_{\lambda}g^s)(v),
\end{eqnarray*}and we have
\begin{equation}
    \label{Convexity-s-Epigraph}
        E((f\star_{\lambda,s}g)^s)
        =
        E(f^s)+_{\lambda}E(g^s).
\end{equation}
For $s=0$, $-\log{f}$ and $-\log{g}$ are convex and
\begin{eqnarray*}
	-\log{(f\star_{\lambda,0}g)}(v) &=&
        -\log\parenth{\underset{v={x_1}+_{\lambda}x_2}{\sup}\{\mathcal{M}_{\lambda}^0(f(x_1),g(x_2))\}}\\ &=
        &\underset{v={x_1}+_{\lambda}x_2}{\inf}\{-\log
        \mathcal{M}_{\lambda}^0(f(x_1),g(x_2))\}\\ &=&
        ((-\log{f})\square_{\lambda}(-\log{g}))(v),
\end{eqnarray*}which implies
\begin{equation}
    \label{Convexity-Log-Epigraph}
        E(-\log{(f\star_{\lambda,0}g)})
        =
        E(-\log{f})+_{\lambda}E(-\log{g}).
\end{equation}
Lastly, for $s>0$, $-f^s$, $-g^s$ are convex and
\begin{eqnarray*}
        -(f\star_{\lambda,s}g)^s(v)
        &= &
        -\underset{v={x_1}+_{\lambda}x_2}{\sup}\mathcal{M}_{\lambda}^1\parenth{{f^s}(x_1),g^s(x_2)}\\
		&=& \underset{v={x_1}+_{\lambda}x_2}{\inf}\mathcal{M}_{\lambda}^1\parenth{{-f^s}(x_1),-g^s(x_2)}\\
        &=&\nonumber
        (-f^s\square_{\lambda}-g^s)(v), 
\end{eqnarray*}from which it follows that
\begin{equation}
    \label{Convexity-Hypograph}
        H((f\star_{\lambda,s}g)^s) = H(f^s)+_{\lambda}H(g^s).
\end{equation}



%
%

\section{Convex hull and $\mathcal{M}$-addition operations}

Let $C\subseteq \mathbb{R}^N$ be a compact convex set; for
$x_1,\ldots,x_N\in\R^n$, we view the $n\times N$ matrix $[x_1,\ldots,x_N]$ as
an operator from $\mathbb{R}^N$ to $\mathbb{R}^n$. Then
\begin{equation}
\label{Conv-Op-Points}
  [x_1,\ldots,x_N]C =\Bigl\{\sum\nolimits_i
    c_ix_i:c=(c_i)\in C\Bigr\}
\end{equation}
produces a convex set in $\R^n$. This viewpoint was used by the
first-named author and Paouris in \cite{paopivprob} in randomized
isoperimetric inequalities for convex sets; for the special case
$C=\conv{e_1,\ldots,e_N}$, one has
    \[
        [x_1,\ldots,x_N]C = \conv{x_1,\ldots,x_N}.
    \]
Moreover, for vectors $x_1\ldots,x_N, x_{N+1}$,$\ldots$, $x_{N+M}$, we
have
\begin{eqnarray}
  \lefteqn{[x_1,\ldots,x_N]C_N+[x_{N+1},\ldots,X_{N+M}]C_M }\nonumber \\
  & &=
  [x_1,\ldots,x_N]C_N + [x_{N+1},\ldots,x_{N+M}]C_M
  \nonumber\\
  & &=
  [x_1,\ldots,x_{N+M}](C_N+\widehat{C}_M),
  \label{eqn:hat_sum}
\end{eqnarray}
where $C_k=\mathop{\rm conv}\{e_1,\ldots,e_k\}$ for $k=N,M$ and
$\widehat{C}_M=\mathop{\rm conv}\{e_{N+1},\ldots,e_{N+M}\}$.  The
convex operations on points \eqref{Conv-Op-Points} can also be
generalized to convex operations on sets by using the notion of
$\mathcal{M}$-addition. This was introduced by Gardner, Hug, and Weil
\cite{GarHugWei:2013-Operations,GarHugWei:2014-Orlicz} as a unifying
framework for operations in Lutwak, Yang, and Zhang's $L_p$ and Orlicz
Brunn-Minkowski theory (see e.g., \cite{LutYanZha:2010-Orlicz}).  For
$\mathcal{M}\subseteq\R^N$ and subsets $K_1,\ldots, K_N$ in $\R^n$,
their {\it $\mathcal{M}$-combination} is defined by
\begin{eqnarray*}
  \oplus_{\mathcal{M}}(K_1,\ldots,K_N) &=& \Bigl\{
  \displaystyle\sum\limits_{i=1}^N m_ix_i : x_i\in K_i,
  (m_1,\ldots,m_N)\in \mathcal{M} \Bigr\}.
\end{eqnarray*}
Thus, with this notation, for $C = \mathcal{M}$,
\begin{equation*}
  \oplus_{C}(\{x_1\},\ldots,\{x_N\}) = [x_1,\ldots,x_N]C.
\end{equation*}
Additionally, when $K_1,\ldots, K_N$ are convex and $\mathcal{M}$ is
compact, convex and contained in the positive orthant or
origin-symmetric, then $\oplus_{\mathcal{M}}(K_1,\ldots,K_N)$ is
convex \cite[Theorem 6.1]{GarHugWei:2013-Operations}.

To connect with the epigraphs and hypographs defined in \S
\ref{Epi-con}, we use $\mathcal{M}$-combinations of rays and line segments
in $\R^{n+1}$. Let $C\subseteq\R^N$ be a compact, convex set contained
in the positive orthant, $\rho_1,\dots,\rho_N\in\R$, and
$x_1,\dots,x_N\in\R^n$. We define the rays
    \begin{subequations}
    \begin{align}
    \label{Ray-Epi}
        R_{\rho_i}(x_i)
        &=
        \{(x_i,r)\in\R^n\times\R: \rho_i\leq r\}\\
\intertext{and the line segments}
    \label{Ray-Hypo}
        \widetilde{R}_{\rho_i}(x_i)
        &=
        \{(x_i,r)\in\R^n\times[0,\infty): \rho_i\geq r\}.
    \end{align}
    \end{subequations}
Accordingly,
    \begin{equation*}
        \oplus_C(R_{\rho_1}(x_1),\ldots,R_{\rho_N}(x_N))
        \hspace{1em} \text{ and } \hspace{1em}
        \oplus_C(\widetilde{R}_{\rho_1}(x_1),\ldots,\widetilde{R}_{\rho_N}(x_N))
    \end{equation*}
form the epigraph of a convex function and the hypograph of a concave
function, respectively. By choosing $C=\conv{e_1,\ldots,e_N}$, one
simply takes the convex hull of the rays or line segments,
respectively.

%
%
%

\section{Shadow systems of $s$-concave functions}

\label{section:shadow}

In this section, we recall the notion of linear parameter and shadow
systems of convex sets and extend these to $s$-concave functions.  We
establish a corresponding convexity property in the functional
setting.

Recall the notation for linear parameter systems from the
introduction: for an index set $I$, bounded sets $\{x_i\}_{i\in
  I}\subseteq\R^n$ and $\{\lambda_i\}_{i\in I}\subseteq \mathbb{R}$
and $\theta\in\Sp^{n-1}$, $a\leq t\leq b$, we set
    \[
        K_t = \conv{x_i+\lambda_i t\theta:i\in I}, \hspace{1em} t\in
        [a,b].
    \]
In the notation of \S \ref{prelim}, we use $u(K_t,\cdot)$,
$\ell(K_t,\cdot)$, $P=P_{\theta^{\perp}}$, and we set $D=PK$. Define
$L(x)=u(K_t,x)-\ell(K_t,x)$. Rogers and Shephard proved the
fundamental fact that for $x\in D$, $t\mapsto L(K_t,x)$ is
convex. Consequently, the map
\begin{equation}
  \label{eqn:RS_convex}
t\mapsto \lvert K_t \rvert =\int_{D}L(K_t,x)dx
\end{equation}
is a convex function of $t$. For background on shadow systems, see
e.g., \cite{campigronchionvolume, meyreishadow, saroglou2013shadow},
and \cite[\S 10.4]{Schneider-Book}.

We will use certain linear parameter systems for a finite index set
$I=\{1,\ldots,N\}$ associated to the $\oplus_C$ operations of the
previous section for epigraphs and hypographs.

\begin{Prop}
  \label{prop:lps_EH}
  Let $C$ be a compact convex set in $\mathbb{R}^N$ contained in the
  positive orthant. Let $\{x_i\}_{i=1}^N\subseteq \mathbb{R}^n$,
  $\{z_i\}\subseteq (0,\infty)$ and $\{\lambda_i\}_{i=1}^N\subseteq
    \mathbb{R}$.  For $s\not=0$, let $\rho_i(s) = z_i^s$ and for $s=0$
    let $\rho_i(s)=-\log z_i$. For $a<b$ and $t\in [a,b]$, let
\begin{eqnarray*}
    \label{Shadow-Epigraphs}
        E^s_t =
        \oplus_C(\{{R}_{\rho_i(s)}(x_i+\lambda_it\theta)\}_{i=1}^N)
        \quad (s\geq 0)
\end{eqnarray*}
and 
\begin{equation*}
    \label{Shadow-Hypograph}
        H^s_t = \oplus_{C}(\{\widetilde{R}_{\rho_i(s)}(x_i+\lambda_i
        t\theta)\}_{i=1}^N) \quad (s<0).
\end{equation*}
Then for $\nu$ as in \eqref{Integral-Measure}, $t \mapsto \nu(E^s_t)$
and $t\mapsto \nu(H^s_t)$ are convex.
\end{Prop}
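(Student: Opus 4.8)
The plan is to lift the picture to $\R^{n+1}$, recognize $(E^s_t)_{t\in[a,b]}$ and $(H^s_t)_{t\in[a,b]}$ as linear parameter systems along the direction $\bar\theta:=(\theta,0)\in\Sp^{n}\subseteq\R^{n+1}$, and then run the Rogers--Shephard chord argument weighted by $h(x_{n+1})$. The key structural point is that $x_{n+1}$ is a coordinate on $\bar\theta^{\perp}=\theta^{\perp}\times\R$, so the density of $\nu$ is constant along $\bar\theta$ and nonnegative on the range of values that occur; slicing in the $\bar\theta$ direction therefore reduces convexity of $t\mapsto\nu(E^s_t)$ to convexity of the chord length $t\mapsto L(E^s_t,\xi)$ for each fixed $\xi$ in the shadow $\bar P(E^s_t)$ (where $\bar P$ is the orthogonal projection onto $\bar\theta^{\perp}$), which we will see does not depend on $t$.

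First I would unwind the $\oplus_C$ operation. Since $C$ lies in the positive orthant, a point of $\oplus_C(\{R_{\rho_i(s)}(x_i+\lambda_i t\theta)\}_{i=1}^N)$ has the form $\sum_i m_i(x_i+\lambda_i t\theta,r_i)$ with $m\in C$ and $r_i\ge\rho_i(s)$, so that its last coordinate $\sum_i m_ir_i$ ranges over $[\sum_i m_i\rho_i(s),\infty)$ for each fixed $m$. Thus
\[
 E^s_t=\Bigl\{(v,z)\in\R^n\times\R:\ \exists\,m\in C,\ \textstyle\sum_i m_i(x_i+\lambda_i t\theta)=v,\ z\ge\sum_i m_i\rho_i(s)\Bigr\},
\]
and replacing $r_i\ge\rho_i(s)$ by $0\le r_i\le\rho_i(s)$ yields the same description of $H^s_t$ with $0\le z\le\sum_i m_i\rho_i(s)$; recall that both sets are convex.

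Next, for $\xi=(w,\zeta)\in\theta^{\perp}\times\R=\bar\theta^{\perp}$ I would describe the chord $\{r\in\R:\xi+r\bar\theta\in E^s_t\}$ (with respect to $\bar\theta$). Since $\xi+r\bar\theta=(w+r\theta,\zeta)$, applying $P_{\theta^{\perp}}$ and $\langle\cdot,\theta\rangle$ to the equation $\sum_i m_i(x_i+\lambda_i t\theta)=w+r\theta$ splits it into the $t$-free constraint $\sum_i m_iP_{\theta^{\perp}}x_i=w$ and the formula $r=\sum_i m_i\langle x_i,\theta\rangle+t\sum_i m_i\lambda_i$. Hence the chord equals $\{\sum_i m_i\langle x_i,\theta\rangle+t\sum_i m_i\lambda_i:\ m\in\mathcal C_\xi\}$, where
\[
 \mathcal C_\xi=\Bigl\{m\in C:\ \textstyle\sum_i m_iP_{\theta^{\perp}}x_i=w,\ \sum_i m_i\rho_i(s)\le\zeta\Bigr\}
\]
(for $H^s_t$ replace ``$\le\zeta$'' by ``$\ge\zeta$'', together with $\zeta\ge0$). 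The set $\mathcal C_\xi$ is convex and independent of $t$; in particular the chord is nonempty precisely when $\mathcal C_\xi\neq\emptyset$, a condition independent of $t$, so $\bar P(E^s_t)$ is the same set for all $t\in[a,b]$, and the chord is bounded since $w+r\theta$ must lie in the bounded set $[x_1+\lambda_1 t\theta,\dots,x_N+\lambda_N t\theta]C$. As $m\mapsto\sum_i m_i\langle x_i,\theta\rangle+t\sum_i m_i\lambda_i$ is affine in $t$ for each $m$, the upper endpoint $u(E^s_t,\xi)$ is a supremum of affine functions of $t$, hence convex, and the lower endpoint $\ell(E^s_t,\xi)$ is an infimum of affine functions, hence concave; therefore $t\mapsto L(E^s_t,\xi):=u(E^s_t,\xi)-\ell(E^s_t,\xi)$ is convex (and identically $0$ for $\xi\notin\bar P(E^s_t)$). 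The same holds for $H^s_t$.

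Finally I would integrate: decomposing $x=\xi+r\bar\theta$ with $\xi\in\bar\theta^{\perp}$ and using $x_{n+1}=\xi_{n+1}$, Fubini gives
\[
 \nu(E^s_t)=\int_{\bar\theta^{\perp}}h(\xi_{n+1})\,L(E^s_t,\xi)\,d\xi,
\]
and likewise for $H^s_t$; for the densities in \eqref{Integral-Measure} these integrals are finite ($H^s_t$ is bounded, and in the epigraph cases $h$ has an integrable tail). Since $h$ is nonnegative on the range of $\xi_{n+1}$ contributing to the integral, the integrand is, for each $\xi$, a convex function of $t$, and convexity is preserved under integration, which proves the proposition. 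The step I expect to require the most care is the unwinding of $\oplus_C$ together with the observation that projecting along $(\theta,0)$ makes the constraint on the coefficient vector $m$ independent of $t$ --- this is exactly what produces the ``supremum/infimum of affine functions'' structure underlying the Rogers--Shephard type convexity; the unboundedness of the epigraph is harmless, as it occurs only in the $x_{n+1}$-direction, which is orthogonal to $(\theta,0)$ and so does not affect the chords.
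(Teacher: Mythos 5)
Your proof is correct, and it reaches the conclusion by a different Fubini organization than the paper's.  The paper first slices $E_t^s$ (resp.\ $H_t^s$) by the horizontal hyperplanes $\pi_z=e_{n+1}^{\perp}+ze_{n+1}$, observes that each slice $E_t^s\cap\pi_z$ is a linear parameter system of convex bodies in the $n$-dimensional plane $\pi_z$, invokes the Rogers--Shephard volume-convexity \eqref{eqn:RS_convex} as a black box for each slice, and then integrates against $h(z)\,dz$.  You instead slice along $\bar\theta=(\theta,0)$ --- the direction of the parameter system in $\R^{n+1}$ --- and prove chord-length convexity directly: after unwinding $\oplus_C$, the equation $\sum_i m_i(x_i+\lambda_i t\theta)=w+r\theta$ splits into a $t$-free membership constraint on $m$ (giving the convex, $t$-independent set $\mathcal C_{\xi}$) and $r$ affine in $t$ for each $m$, so the chord endpoints are respectively a supremum and an infimum of affine functions of $t$.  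The two arguments unravel to the same underlying computation --- the Rogers--Shephard result used in \eqref{eqn:RS_convex} is itself established by chord convexity along $\theta$ followed by integration over the shadow --- but yours is more self-contained, as you rederive that ingredient rather than cite it, and it makes explicit the structural fact that both versions quietly exploit, namely $\bar\theta\perp e_{n+1}$, so the density $h(x_{n+1})$ is constant along $\bar\theta$-lines (whereas the paper exploits the equivalent fact that $\theta\in e_{n+1}^{\perp}$ so the linear parameter system lives inside each $\pi_z$).  Both proofs also need $h\ge 0$ on the relevant range, which holds for each of the three densities in \eqref{Integral-Measure}.
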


\begin{proof}Let $s\geq 0$. For $z\in \mathbb{R}$, write
$\pi_z=e_{n+1}^{\perp}+ze_{n+1}$ so that
\begin{equation*}
\nu(E^s_t) = \int_{\mathbb{R}} \lvert E^s_t\cap \pi_z\rvert h(z) dz.
\end{equation*}
Thus it suffices to show that for fixed $z$, $t\mapsto \lvert
E^s_t\cap \pi_z\rvert$ is convex.  We have
\begin{eqnarray*}
  E^s_t&=& \left\{\sum_{i=1}^N
  c_i(x_i+r_ie_{n+1})+\left(\sum_{i=1}^N c_i \lambda_i\right)t\theta:
  c\in C, \rho_i(s)\leq r_i\right\}.
\end{eqnarray*}
As noted in the previous section, since $C$ is compact, convex and
contained in the positive orthant, $E_t^s$ is convex.  For $c\in C$,
we write $x_c =\sum_{i=1}^N c_ix_i$, $r_c=\sum_{i=1}^N c_ir_i$, and
$\lambda_c=\sum_{i=1}^N c_i\lambda_i$. For $x_c+r_c e_{n+1} +\lambda_c
t\theta\in E_t^s\cap \pi_z$, we have $r_c=z$ and the
sets $\{x_c+ze_{n+1}\}$, $\{\lambda_c\}$ are bounded. Thus $E_t^s\cap
\pi_z$ is a linear parameter system of convex sets indexed by $C$.
Then $t\mapsto \lvert E^s_{t} \cap \pi_z\rvert $ is convex by
\eqref{eqn:RS_convex}. The argument for $s<0$ is analogous.
\end{proof}

Next, we define a linear parameter system of $s$-concave
functions. Let $I$ be an index set, and $\{(x_i,z_i)\}_{i\in
  I}\subseteq \mathbb{R}^n\times[0,\infty)$ a collection of points
  under the graph of some integrable $s$-concave function. For $s\geq
  0$, let $T_{\{w_i\},s}$ and be the least $s$-concave function above
  $\{w_i\}$.  For $s<0$, let $T_{\{w_i\},s}$ be the greatest
  $s$-concave function beneath $\{w_i\}.$ More explicitly,
  $T_{\{w_i\},s}$ is supported on $\mathop{\rm conv}\{x_i:i\in I\}$
  and given by
    \begin{subequations}
    \begin{align*}
      T_{\{w_i\},s}(x) &=
        \begin{cases}
            \inf\{z^{1/s}:(x,z)\in \mathcal{P}_{\{w_i\}}\}, & \text{if $s<0$}
            \\[.2em] \sup\{e^{z}:(x,z)\in \mathcal{P}_{\{w_i\}}\}, & \text{if
              $s=0$}, \\[.2em] \sup\{z^{1/s}:(x,z)\in \mathcal{P}_{\{w_i\}}\}, &
            \text{if $s>0$},
        \end{cases}
\intertext{where}
        \mathcal{P}_{\{w_i\}}
        &=
        \begin{cases}
            {\rm conv}\{(x_i,{z^s_i})\}_{i\in I}, & \text{if
              $s\neq0$}. \\[.2em] {\rm conv}\{(x_i,\log{z_i})\}_{i\in
              I}, & \text{if $s=0$}. 
        \end{cases}
    \end{align*}
\end{subequations}
With the above notation, we assume that $w_i(t) = (x_i+\lambda_i
t\theta,z_i)$, $a\leq t\leq b$. Then setting
$f_{t,s}=T_{\{w_i(t)\},s}$, we call the family $\{f_{t,s}\}$ a linear
parameter system of $s$-concave functions. The convexity
property corresponding to \eqref{eqn:RS_convex} reads as follows.

\begin{Prop} 
  \label{prop:lpsconcave}
  Let $\{f_{t,s}\}$ be a linear parameter system of $s$-concave
  functions. Then
  $$ t\mapsto \int_{\mathbb{R}^n} f_t(v)dv
  $$ is a convex function.
\end{Prop}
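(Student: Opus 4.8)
The plan is to deduce the statement from the already-proven Proposition \ref{prop:lps_EH} by taking there $C=\conv{e_1,\ldots,e_N}$, so that $\oplus_C$ is ordinary convex hull, and by identifying $\int_{\R^n}f_{t,s}$ with the $\nu$-measure of one of the sets $E^s_t$, $H^s_t$ appearing in that proposition. Since the points $\{(x_i,z_i)\}_{i\in I}$ lie under the graph of an integrable $s$-concave function, each $f_{t,s}$ is integrable and all the measures below are finite, so there is no convergence issue; I will treat the three ranges $s<0$, $s=0$, $s>0$ separately, the argument being structurally the same in each.

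First, I would fix $t$ and apply the representation \eqref{Integral-Measure} to the function $f_{t,s}$ on its support $A_t=\conv{x_i+\lambda_it\theta:i\in I}$. This rewrites $\int_{\R^n}f_{t,s}(v)\,dv$ as $\nu$ of the epigraph of $f_{t,s}^{\,s}$ (when $s<0$), the epigraph of $-\log f_{t,s}$ (when $s=0$), or the hypograph of $f_{t,s}^{\,s}$ (when $s>0$), where in each case $\nu$ is the measure from \eqref{Integral-Measure}, whose weight $h$ is nonnegative on $(0,\infty)$.

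Next comes the crux: identifying this $(n{+}1)$-dimensional set, simultaneously for all $t$ and with a single index body, with the $\oplus_C$-construction of Proposition \ref{prop:lps_EH}. Unwinding the definition of $f_{t,s}=T_{\{w_i(t)\},s}$ through the polytope $\mathcal P_{\{w_i(t)\}}$ with vertices $(x_i+\lambda_it\theta,\rho_i(s))$ — where $\rho_i(s)=z_i^s$ for $s\neq0$ and $\rho_i(s)=-\log z_i$ for $s=0$ — one checks that the relevant transform $f_{t,s}^{\,s}$ (resp.\ $-\log f_{t,s}$) is the appropriate one-sided envelope of $\mathcal P_{\{w_i(t)\}}$, and that the epigraph of a lower envelope is precisely the convex hull of the upward rays $R_{\rho_i(s)}(x_i+\lambda_it\theta)$, while the hypograph of an upper envelope is precisely the convex hull of the segments $\widetilde{R}_{\rho_i(s)}(x_i+\lambda_it\theta)$. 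In other words, the set whose $\nu$-measure equals $\int_{\R^n}f_{t,s}$ is exactly the set $E^s_t$ or $H^s_t$ of Proposition \ref{prop:lps_EH} taken with $C=\conv{e_1,\ldots,e_N}$.

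Finally, Proposition \ref{prop:lps_EH} — whose proof slices the set at each fixed height $z$, recognizes the slice as a linear parameter system of convex sets, and then invokes the Rogers--Shephard convexity \eqref{eqn:RS_convex} before integrating against the nonnegative weight $h$ — gives that $t\mapsto\nu(E^s_t)$ and $t\mapsto\nu(H^s_t)$ are convex; combined with the two previous paragraphs this is precisely the convexity of $t\mapsto\int_{\R^n}f_{t,s}(v)\,dv$. I expect the main obstacle to be the identification in the third paragraph: one must track the several sign conventions (for $s$-concavity, for the monotonicity of $z\mapsto z^{1/s}$, and for the definition of $T_{\{w_i\},s}$) across the three ranges of $s$ and verify, in each case, the ``one-sided envelope $=$ convex hull of rays/segments'' correspondence uniformly in $t$; once that matching is in place, the result follows immediately from Proposition \ref{prop:lps_EH}.
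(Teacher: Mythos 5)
Your proposal follows the paper's proof exactly: reduce to a finite index set, take $C=\conv{e_1,\ldots,e_N}$ so that the $\oplus_C$-construction gives the convex hull of the rays $R_{\rho_i(s)}(x_i+\lambda_it\theta)$ (resp.\ segments $\widetilde{R}_{\rho_i(s)}$), identify this set with $E(f_{t,s}^s)$, $E(-\log f_{t,s})$, or $H(f_{t,s}^s)$ according to the sign of $s$, convert $\int f_{t,s}$ to the $\nu$-measure of that set via \eqref{Integral-Measure}, and conclude from Proposition \ref{prop:lps_EH}. The identification step you flag as the potential obstacle is precisely the one the paper asserts without further comment, so your proof and the paper's agree both in strategy and in where the bookkeeping burden lies.
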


\begin{proof} 
As for linear parameter systems of convex sets, we can assume without
loss generality that $I$ is finite, say $I=\{1,\ldots,N\}$. We take
$C$ to be $\mathop{\rm conv}\{e_1,\ldots,e_N\}$. In the notation of
the previous proposition, we have
\begin{equation*}
  E_t^s = \mathop{\rm
    conv}\{{R}_{\rho_i(s)}(x_i+\lambda_it\theta)\}_{i=1}^N
\end{equation*} and $E_t^s = E(-\log f_{t,s})$ for $s=0$, while 
$E_t^s = E(f_{t,s}^s)$ for $s<0$.  Similarly,
\begin{equation*}
    H^s_t = \mathop{\rm
      conv}\{\widetilde{R}_{\rho_i(s)}(x_i+\lambda_it\theta)\}_{i=1}^N,
\end{equation*}hence $H^s_t = H(f_{t,s}^s)$ for $s>0$.
By \eqref{Integral-Measure}, we have
\begin{equation*}
  \int f_{t,s}(v)dv =
        \begin{cases}
            \nu(E^s_t), & \text{for }
            h(x_{n+1})=-\frac{1}{s}\,x_{n+1}^{\frac{1}{s}-1}, \text{if
              $s<0$}. \\[0.5em] \nu(E^s_t), & \text{for }
            h(x_{n+1})=e^{-x_{n+1}}, \hspace{.9em}\text{if
              $s=0$}. \\[0.5em] \nu(H^s_t), & \text{for }
            h(x_{n+1})=\frac{1}{s}\,x_{n+1}^{\frac{1}{s}-1}, \hspace{.9em}\text{if
              $s>0$},
        \end{cases}
\end{equation*}
and we can conclude the proof by applying Proposition
\ref{prop:lps_EH}.
\end{proof}

Shephard \cite{Shephard:1964shadow} introduced shadow systems to
extend linear parameter systems.  Given a convex body
$K\subseteq\R^n$, a bounded function $\alpha:K\to\R$ and
$t\in[a,b]$, a shadow system in direction
$\theta\in\Sp^{n-1}$ is a family of convex sets
    \begin{equation}
        K_t
        =
        \conv{x+\alpha(x)t\theta : x\in K}.
    \end{equation}
Then $K_t=P_t\overline{K}$, where $\overline{K}=\{(x,\alpha(x)):x\in
K\}$ and $P_t:\mathbb{R}^{n}\times\mathbb{R}\rightarrow \mathbb{R}^n$
is the projection parallel to $e_{n+1}-t\theta$ given by $P_t(x,y) =
x+ty\theta$. Conversely, for any convex body
$\overline{K}\subseteq\R^{n+1}$, $\theta\in\Sp^{n-1}$, and
$t\in[a,b]$, the family $\{K_t\}=\{{P}_t\overline{K}\}\subseteq\R^n$
is a shadow system of convex bodies.

The correspondence between linear parameter systems of $s$-concave
functions and epigraphs/hypographs in the proof of Proposition
\ref{prop:lpsconcave} affords a similar extension to shadow systems.
We can simply start with shadow systems of epigraphs or hypographs.
Let $\mathcal{E}\subseteq\R^{n+2}$ be the epigraph of a convex function
$\Phi:\R^{n+1}\to[0,\infty)$, $\theta\in\Sp^{n-1}$, $t\in[a,b]$, and
  the projection $P_t$ from $\R^n\times\R\times\R$ onto $\R^n\times\R$
  parallel to $e_{n+2}-t\theta$ given by
    \begin{equation}
    \label{Epi-Projection}
        P_t(x,z,y)
        =
        (x+ty\theta,z).
    \end{equation} 
Then the family $\{E_t\}=\{P_t\mathcal{E}\}$ is a shadow system of
epigraphs of convex functions, where
     \begin{equation}
     \label{Shadow-System-Epigraphs}
        E_t = \mathop{\rm conv}\{
        (x+yt\theta,z)\in\R^n\times\R : (x,y,z)\in \mathcal{E} \}.
     \end{equation}
     Consequently, let $\phi:\R^n\to[0,\infty)$ be a convex function,
       $\alpha:E(\phi)\to\R$ a function such that $\alpha|_{E(\phi)\cap
         \pi_z}:\R^n\to\R$ is bounded for all $z\in\R$. Consider
       $\Phi:\R^{n+1}\to\R$ described by its epigraph
       $E(\Phi)=\mathop{\rm conv}\{(x,z,\alpha(x,z)):\phi(x)\leq z\}$
       and set
       \begin{align*}
		(E(\phi))_t
		=
		P_tE(\Phi)
		=
		\mathop{\rm conv}\{(x+\alpha(x,z)t\theta,z):\phi(x)\leq z\}.
	\end{align*}
	Then we define the shadow system of convex functions
        $\phi_t:\R^n\to[0,\infty)$ in direction $\theta\in\Sp^{n}$ by
    \begin{equation}
    \label{Shadow-System-Functions}
        \phi_t(x)
        =
        \inf\{z : (x,z)\in (E(\phi))_t\},
    \end{equation}
Analogously, given the hypograph $\mathcal{H}\subseteq\R^{n+2}$ of a
concave function $\Psi:\R^{n+1}\to[0,\infty)$ the family
  $\{H_t\}=\{P_t\mathcal{H}\}$ is a shadow system of hypographs of
  concave functions where
     \begin{equation}
     \label{Shadow-System-Hypographs}
        H_t = \mathop{\rm conv}\{ (x+yt\theta,z)\in\R^n\times\R :
        (x,z,y)\in \mathcal{H} \}.
     \end{equation}
Similarly, for a concave function $\psi:\R^n\to[0,\infty)$,
  $\alpha:H(\psi)\to\R$ a function such that $\alpha|_{H(\psi)\cap
    \pi_z}:\R^n\to\R$ is bounded for all $z\in\R$. Consider the
  function $\Psi:\R^{n+1}\to\R$ with hypograph
  $H(\Psi)=\{(x,z,\alpha(x,z)):0\leq z \leq \psi(x)\}$ and set
\begin{align*}
(H(\psi))_t
=
P_tH(\Psi)
=
\mathop{\rm conv}\{(x+\alpha(x,z)t\theta,z):0\leq z \leq\psi(x)\}.
\end{align*}
Then we define the shadow system of concave functions
$\psi_t:\R^n\to\R$ by
    \begin{equation}
    \label{Shadow-System-Concave-Functions}
        \psi_t(x) = \sup\{z : (x,z)\in (H(\psi))_t\}.
    \end{equation}

\begin{Prop}
\label{Parameter-Convexity}
For shadow systems of epigraphs $E_t$ and hypographs $H_t$ and $\nu$
as in \eqref{Integral-Measure}, we have that $t \mapsto \nu(E_t)$ and
$t\mapsto \nu(H_t)$ are convex.
\end{Prop}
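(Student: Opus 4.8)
The argument will parallel that of Proposition~\ref{prop:lps_EH}, with shadow systems of convex bodies in place of linear parameter systems. The plan is to slice the ambient space by the hyperplanes $\pi_z$, reduce to a convexity statement for each slice, and recognize that slice as an honest shadow system of convex bodies, to which Shephard's theorem applies.

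First I would pass to slices. With $\pi_z=e_{n+1}^{\perp}+ze_{n+1}\subseteq\R^n\times\R$, Fubini's theorem gives $\nu(E_t)=\int_{\R}|E_t\cap\pi_z|\,h(z)\,dz$; since $h\ge 0$ on the relevant range in each case of \eqref{Integral-Measure}, it suffices to prove that $t\mapsto|E_t\cap\pi_z|$ is convex for every fixed $z$ and then integrate against the nonnegative density $h(z)\,dz$. The key observation is that the projection $P_t$ of \eqref{Epi-Projection} fixes the $(n+1)$-st coordinate, so that slicing commutes with it: writing $\widetilde\pi_z=\{(x,z',y)\in\R^n\times\R\times\R:z'=z\}$ for the corresponding coordinate hyperplane in $\R^{n+2}$, one has
\[
  E_t\cap\pi_z=(P_t\mathcal{E})\cap\pi_z=P_t\bigl(\mathcal{E}\cap\widetilde\pi_z\bigr).
\]
Here $\mathcal{E}\cap\widetilde\pi_z$ is a convex body in $\R^{n+1}$ that does not depend on $t$, and, identifying $\widetilde\pi_z\cong\R^n\times\R$ with coordinates $(x,y)$ and $\pi_z\cong\R^n$, the restriction of $P_t$ to $\widetilde\pi_z$ is exactly $(x,y)\mapsto x+ty\theta$, the projection parallel to $e_{n+1}-t\theta$. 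Thus $\{E_t\cap\pi_z\}_{t\in[a,b]}$ is precisely the shadow system in direction $\theta$ generated by the fixed convex body $\mathcal{E}\cap\widetilde\pi_z$, and so $t\mapsto|E_t\cap\pi_z|$ is convex by Shephard's theorem \cite{Shephard:1964shadow}. The hypograph statement for $H_t$ follows in the same way, with $\mathcal{H}$ in place of $\mathcal{E}$ (and line segments in place of rays).

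The step I expect to require the most care is really a matter of finiteness rather than of the convexity mechanism: Shephard's theorem is stated for compact convex bodies and the Fubini reduction needs $\nu(E_t)<\infty$, so one wants the slices $\mathcal{E}\cap\widetilde\pi_z$ to be bounded. This is automatic for the epigraphs and hypographs built in \eqref{Shadow-System-Epigraphs}--\eqref{Shadow-System-Hypographs} from \emph{integrable} $s$-concave functions, since those slices are then sublevel sets of the convex functions $f_{t,s}^s$ ($s<0$) or $-\log f_{t,s}$ ($s=0$), respectively superlevel sets of the concave function $f_{t,s}^s$ ($s>0$), all of which are bounded by integrability. For a version free of such hypotheses one can instead note that, for fixed $z$, intersecting the fixed body $\mathcal{E}\cap\widetilde\pi_z$ with an increasing family of balls and applying $P_t$ yields shadow systems of bounded convex bodies whose volumes increase to $|E_t\cap\pi_z|$, so the convexity in $t$ persists in the (possibly infinite) limit and then passes through the integral.
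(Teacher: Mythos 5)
Your proof is correct and takes essentially the same route as the paper's: reduce via Fubini to slices $E_t\cap\pi_z$, recognize each slice as a shadow system of convex bodies (you make explicit the commutation $E_t\cap\pi_z=P_t(\mathcal{E}\cap\widetilde\pi_z)$ that the paper leaves implicit), and invoke the Rogers--Shephard/Shephard convexity of volume along such a system. Your added remarks on boundedness of the slices and finiteness of $\nu(E_t)$ are useful technical points the paper does not spell out, but the core argument is the same.
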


\begin{proof}
  Let $\nu$ be as in \eqref{Integral-Measure}. Then
  \[
  \nu(E_{t}) = \int_{\R} {|E_{t} \cap \pi_z| h(z)} dz.
  \]
  For each $z$, the restriction of the epigraph $E_t$ to the parallel
  hyperplane $\pi_z$ is a shadow systems of convex bodies. Thus the
  convexity of $t\mapsto \nu(E_{t})$ follows from the convexity of the
  function \eqref{eqn:RS_convex}. The proof for $H_t$ is analogous.
\end{proof}

%
%
%

\section{Random epigraphs and hypographs}
\label{Ran-Fun-Epi}

In this section, we take our stochastic model for $[f]_N$, as defined
in the introduction, and reformulate it in terms of epigraphs and
hypographs.  Thus for an integrable $s$-concave function
$f:\R^n\to[0,\infty)$, we sample independent random vectors
  $(X_1,Z_1),\ldots,(X_N,Z_N)$ according to Lebesgue measure on
    \begin{equation}
    \label{eqn:graph}
        G_f:=\{(x,z)\in\R^n\times[0,\infty):x\in{\rm supp}f, z\leq
          f(x)\}.
    \end{equation}
    For $C_N=\mathop{\rm conv}\{e_1,\ldots,e_N\}$, we set
   \begin{subequations}
    \begin{align}
    \label{Random-Epigraphs}
        [E^s]_N
        &=
        \begin{cases}
            \oplus_{C_N}({R}_{Z^s_1}(X_1),\dots,{R}_{Z^s_N}(X_N)),
            \hfill \text{if
              $s<0$},\\ \oplus_{C_N}({R}_{-\log{Z_1}}(X_1),\dots,{R}_{-\log{Z_N}}(X_N)),\hfill
            \text{if $s=0$}.
        \end{cases}\\
    \label{Random-Hypograph}
        [H^s]_N &=
        \oplus_{C_N}(\widetilde{R}_{Z^s_1}(X_1),\dots,\widetilde{R}_{Z^s_N}(X_N)),\hspace{4.3em}\text{if
          $s>0$}.
    \end{align}
    \end{subequations}
With this notation, 
    \begin{alignat*}{2}
        [E^s]_N&=E([f]_N^s), & \text{if $s<0$}.\\[.5em]
        [E^s]_N&=E(-\log{[f]_N}), \hspace{1em} & \text{if $s=0$}.\\[.5em]
        [H^s]_N&=H([f]_N^s), & \text{if $s>0$}
    \end{alignat*}
and, by \eqref{Integral-Measure}, 
    \begin{subequations}
    \begin{align}
    \label{Random-Integral-Measure}
        \int{[f]_N} &=
        \begin{cases}
            \nu([E^s]_N), & \text{for } h(x_{n+1})=-\frac{1}{s}  x_{n+1}^{\frac{1}{s}-1}, \hfill \text{if $s<0$}. \\[0.5em]
            \nu([E^s]_N), & \text{for } h(x_{n+1})=e^{-x_{n+1}},  \hfill \text{if $s=0$}. \\[0.5em]
            \nu([H^s]_N), & \text{for } h(x_{n+1})=\frac{1}{s}  x_{n+1}^{\frac{1}{s}-1}, \hfill \text{if $s>0$}.
        \end{cases}
    \end{align}
    \end{subequations}

%
%
%
%
%
%
%
%
%
%

\section{Multiple integral rearrangement inequalities}

\label{Steiner-Convexity}

\subsection{Rearrangements and Steiner convexity}

In this section, we show that the multiple integral rearrangement
inequalities of Rogers \cite{Rog:1957-Rearrangement}, and Brascamp,
Lieb, and Luttinger \cite{BLL} interface well with our approach. In
particular, Christ's version \cite{christestimates} of the latter
inequalities is especially applicable; as in \cite{paopivrand}, the
following formulation is convenient for our purpose.

\begin{Th}
  \label{RBLLC}
  Let $f_1,\ldots,f_N$ be non-negative integrable functions on
  $\mathbb{R}^n$ and $F:(\mathbb{R}^n)^N\rightarrow [0,\infty)$. Then
   \begin{eqnarray*}
    \lefteqn{\int\limits_{(\R^n)^N} F(x_1,\ldots,x_N)\prod_{i=1}^N
      f_i(x_i)dx_1\ldots dx_N}\\ & \geq & \int\limits_{(\R^n)^N}
    F(x_1,\ldots,x_N)\prod_{i=1}^N f^*_i(x_i)dx_1\ldots dx_N,
  \end{eqnarray*}whenever  $F$ satisfies the following condition:
  for each $\theta\in\Sp^{n-1}$ and all
  $Y:=\{y_1,\ldots,y_N\}\subseteq\theta^{\perp}$, the function
  $F_Y:\R^N\to[0,\infty)$ defined by
    \begin{equation*}
      F_{Y,\theta}(t_1,\ldots,t_N):=F(y_1+t_1\theta,\ldots,y_N+t_N\theta)
    \end{equation*} is even and quasi-convex.
\end{Th}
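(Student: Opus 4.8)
The stated inequality runs opposite to the familiar Brascamp--Lieb--Luttinger inequality: because the fibers $F_{Y,\theta}$ are \emph{even and quasi-convex}, rather than even with convex superlevel sets, rearranging the $f_i$ \emph{decreases} the integral. The plan is to deduce it from the classical multiple integral rearrangement inequality (Rogers \cite{Rog:1957-Rearrangement}, Brascamp--Lieb--Luttinger \cite{BLL}, Christ \cite{christestimates}; cf.\ \cite{paopivrand}), used here in the form: \emph{if every fiber $F_{Y,\theta}$ is even with convex superlevel sets, then $\int_{(\R^n)^N}F\prod_i f_i\le\int_{(\R^n)^N}F\prod_i f_i^*$}. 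The passage from this ``quasi-concave fiber'' statement to the ``quasi-convex fiber'' statement claimed here is carried out by a layer-cake decomposition together with complementation.

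Concretely, write $F(x)=\int_0^\infty\ind_{\{F\ge c\}}(x)\,dc$ and use Tonelli to reduce the theorem to proving, for each fixed $c>0$, that
\[
  \int_{(\R^n)^N}\ind_{G_c}(x)\prod_{i=1}^N f_i(x_i)\,dx\ \ge\ \int_{(\R^n)^N}\ind_{G_c}(x)\prod_{i=1}^N f_i^*(x_i)\,dx,\qquad G_c:=\{F\ge c\}.
\]
Put $U_c:=\{F<c\}$, so that $\ind_{G_c}=1-\ind_{U_c}$ on $(\R^n)^N$. Quasi-convexity of each fiber gives $\{F_{Y,\theta}<c\}=\bigcup_{c'<c}\{F_{Y,\theta}\le c'\}$, a nested union of convex sets, hence convex, and evenness makes it symmetric about the origin; thus the fiber of $\ind_{U_c}$ in direction $\theta$ over $Y$ is $\ind_{\{F_{Y,\theta}<c\}}$, the indicator of an origin-symmetric convex subset of $\R^N$, which is even with convex superlevel sets. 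Hence the classical inequality applies to the kernel $\ind_{U_c}$ and gives $\int\ind_{U_c}\prod_i f_i\le\int\ind_{U_c}\prod_i f_i^*$. Since the $f_i$ are integrable, $\int_{(\R^n)^N}\prod_i f_i(x_i)\,dx=\prod_i\int_{\R^n}f_i=\prod_i\int_{\R^n}f_i^*$ is finite, so the subtraction below is legitimate:
\[
  \int\ind_{G_c}\prod_i f_i=\prod_i\!\int f_i-\int\ind_{U_c}\prod_i f_i\ \ge\ \prod_i\!\int f_i^*-\int\ind_{U_c}\prod_i f_i^*=\int\ind_{G_c}\prod_i f_i^*.
\]
Integrating this over $c>0$ completes the argument.

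For a self-contained treatment of the classical ingredient one can run the usual symmetrization scheme: iterating single Steiner symmetrizations $S_{\theta_k}$ along directions for which $S_{\theta_k}\cdots S_{\theta_1}f_i\to f_i^*$ in $L^1$, then passing to an a.e.-convergent subsequence and invoking Fatou, reduces matters to the one-step bound $\int F\prod_i f_i\ge\int F\prod_i(S_\theta f_i)$ for a single $\theta$ --- it is here that one uses the hypothesis for \emph{every} $\theta$, so that a single kernel $F$ is used throughout. Writing $x_i=y_i+t_i\theta$ and applying Fubini turns this into the one-dimensional statement $\int_{\R^N}W(t)\prod_i g_i(t_i)\,dt\le\int_{\R^N}W(t)\prod_i g_i^*(t_i)\,dt$ with $W=F_{Y,\theta}$ and $g_i(s)=f_i(y_i+s\theta)$; layer-caking both $W$ and the $g_i$ reduces this to the set estimate $|K\cap(A_1\times\cdots\times A_N)|\le|K\cap(A_1^*\times\cdots\times A_N^*)|$ for an origin-symmetric convex $K\subseteq\R^N$ and $A_i^*$ the interval with $|A_i^*|=|A_i|$, which in turn follows from \cite{BLL} upon approximating $K$ from outside by finite intersections of symmetric slabs $\{t:|u\cdot t|\le r\}$.

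The main obstacle here is bookkeeping rather than a conceptual difficulty: one must track finiteness so that $\ind_{G_c}=1-\ind_{U_c}$ may be integrated term by term against $\prod_i f_i$ (which is exactly where integrability of the $f_i$ is used), observe that $g_i(s)=f_i(y_i+s\theta)$ is integrable in $s$ for a.e.\ $Y$ when one descends to fibers, and check --- as flagged above --- that the quasi-concave-fiber form of the classical inequality is invoked with its hypothesis holding uniformly in $\theta$. In the paper itself I would most likely cite \cite{christestimates} and \cite{paopivrand} for the classical form and present only the layer-cake and complementation reduction.
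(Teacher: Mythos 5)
Your proposal is correct, but note that the paper does not prove Theorem \ref{RBLLC} at all: it is imported from Rogers, Brascamp--Lieb--Luttinger, and Christ, in the ``Steiner convex'' formulation of \cite{paopivrand}, so there is no internal proof to compare against. Your layer-cake-plus-complementation reduction is sound and is in fact essentially how this reversed form is derived in the cited literature: for fixed $c$ the sublevel set $\{F_{Y,\theta}<c\}$ is an origin-symmetric convex set, the classical quasi-concave-fiber inequality applies to $\ind_{\{F<c\}}$, and the subtraction is justified by $\int\prod_i f_i=\prod_i\|f_i\|_1=\prod_i\|f_i^*\|_1<\infty$; Tonelli then assembles the result over $c$. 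Two small points you should make explicit if you write this up: measurability of $F$ must be assumed (it is implicit in the statement) so that $\{F\ge c\}$ is measurable and Tonelli applies, and in the outer approximation of a fiber's sublevel set by symmetric slabs you should pass to its closure first, since $\{F_{Y,\theta}<c\}$ is convex but need not be closed (replacing a convex set by its closure changes the relevant integrals by a null set, so this is harmless). The sketch of the classical ingredient via iterated Steiner symmetrization and Fatou is also in the right direction, but since you intend to cite \cite{christestimates} and \cite{paopivrand} for it, the essential new content of your argument --- the reduction --- stands on its own.
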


The condition on $F$, namely {\it Steiner convexity}, allows the
theorem to be proved via iterated Steiner symmetrization; notice this
terminology differs from the one in \cite{christestimates}. Of special
interest, this condition interfaces well with shadow systems, e.g.,
\cite{camcolgroanote,rogsheext}; see \cite{paopivrand} for further
background and references.


\begin{Prop}
  \label{SC-of-rays-with-fixed-heights-epi}
  Let $\rho_1,\ldots,\rho_N\in\R$ and $C$ a compact convex set contained in
  the positive orthant. Then the function $F:(\R^n)^N\rightarrow
  [0,\infty)$ defined by
    \begin{equation}
    \label{Epi-Rays-Functional}
        F(x_1,\ldots,x_N) = \num{ \oplus_C( R_{\rho_1}(x_1),\ldots,
          R_{\rho_N}(x_N) ) }
    \end{equation}
  is Steiner convex.
\end{Prop}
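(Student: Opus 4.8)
The plan is to reduce the claim to the convexity statement for linear parameter systems that has already been set up, namely Proposition~\ref{prop:lps_EH}, by showing that the function $F$ in \eqref{Epi-Rays-Functional} is constant along lines in certain directions and, on the relevant two-dimensional sections, agrees with a convex function of a single parameter. Fix $\theta\in\Sp^{n-1}$ and $Y=\{y_1,\ldots,y_N\}\subseteq\theta^\perp$. Writing $x_i=y_i+t_i\theta$, the translation structure of the $\oplus_C$ operation gives
\[
\oplus_C\bigl(R_{\rho_1}(y_1+t_1\theta),\ldots,R_{\rho_N}(y_N+t_N\theta)\bigr)
=
\Bigl(\sum_i c_i t_i\Bigr)\theta + \oplus_C\bigl(R_{\rho_1}(y_1),\ldots,R_{\rho_N}(y_N)\bigr)
\]
only in the degenerate sense that the $\theta$-component of each summand is shifted; more precisely one checks that the set $\oplus_C(R_{\rho_1}(y_1+t_1\theta),\ldots)$ depends on $(t_1,\ldots,t_N)$ only through the shears $x_c+r_ce_{n+1}+\bigl(\sum_i c_it_i\bigr)\theta$. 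So I would first record the identity $F_{Y,\theta}(t_1,\ldots,t_N)=\Phi\bigl(\sum_i c_i t_i\ \text{viewed jointly over }c\in C\bigr)$, i.e. that $F_{Y,\theta}$ factors through the family of linear functionals $c\mapsto\sum_i c_it_i$ indexed by $C$.

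Next I would verify \emph{evenness}: $F_{Y,\theta}(-t_1,\ldots,-t_N)=F_{Y,\theta}(t_1,\ldots,t_N)$. This follows because replacing $t_i$ by $-t_i$ reflects the whole configuration through $\theta^\perp$ (in the $\theta$-direction of $\R^n\subseteq\R^{n+1}$), and the measure $\nu$ from \eqref{Integral-Measure}, being a product of Lebesgue measure in the first $n$ coordinates with $h(x_{n+1})\,dx_{n+1}$, is invariant under this reflection; since $\oplus_C$ commutes with the linear reflection, $\nu$ of the reflected set equals $\nu$ of the original. For \emph{quasi-convexity} I would restrict to an arbitrary line $t\mapsto(t_1^0+\mu_1 t,\ldots,t_N^0+\mu_N t)$ in $\R^N$ and observe that along such a line the configuration $\{y_i+(t_i^0+\mu_i t)\theta\}$, together with the fixed heights $\rho_i$, is exactly a linear parameter system of the type appearing in Proposition~\ref{prop:lps_EH} (with $x_i\leftrightarrow y_i+t_i^0\theta$, $\lambda_i\leftrightarrow\mu_i$, and the same $C$). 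Hence $t\mapsto\nu(E^s_t)=F_{Y,\theta}(\ldots)$ is convex in $t$, which is stronger than quasi-convex; restricting a convex function to a line and taking sublevel sets gives quasi-convexity. Applying this on every line through $\R^N$ yields quasi-convexity of $F_{Y,\theta}$, hence Steiner convexity of $F$, as required.

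\textbf{The main obstacle} I anticipate is the bookkeeping in the first step: making precise the claim that $F(x_1,\ldots,x_N)$, restricted to a line in the direction $(\mu_1,\ldots,\mu_N)$ of $\R^N$ with the $x_i$'s pinned to the lines $y_i+\R\theta$, literally coincides with the function $t\mapsto\nu(E^s_t)$ from Proposition~\ref{prop:lps_EH}. One has to match the role of the ambient $\theta\in\Sp^{n-1}$ in the rearrangement-inequality setup (a direction in $\R^n$) with the $\theta$ appearing inside the $(n+1)$-dimensional epigraph construction; since the ray direction $e_{n+1}$ is orthogonal to all of $\R^n$, the $\theta$-shear and the $e_{n+1}$-ray do not interact, and the section $E^s_t\cap\pi_z$ is genuinely a linear parameter system of convex \emph{sets} in $\R^n$ indexed by $C$, exactly as in the proof of Proposition~\ref{prop:lps_EH}. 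Once that identification is spelled out, invoking Proposition~\ref{prop:lps_EH} (or directly \eqref{eqn:RS_convex}) closes the argument; everything else is the routine reduction of Steiner convexity to evenness plus one-dimensional convexity on lines.
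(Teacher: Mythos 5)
Your proof is correct and follows essentially the same route as the paper: fix $\theta$ and $Y$, observe that convexity of $F_{Y,\theta}$ along an arbitrary line in $\R^N$ is exactly the conclusion of Proposition~\ref{prop:lps_EH} applied to the linear parameter system $x_i=y_i+t_i^0\theta$, $\lambda_i=\mu_i$, and get evenness from the fact that flipping all $t_i$ reflects the whole configuration through $\theta^\perp$, under which $\nu$ (Lebesgue in the $\R^n$ factor times $h(z)\,dz$) is invariant. The opening paragraph, in which you attempt a ``translation'' identity for $\oplus_C$, is a false start and never lands on a coherent statement; it is also unnecessary, since the reduction to Proposition~\ref{prop:lps_EH} in your third paragraph does all the work on its own.
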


\begin{proof} 
    Let $\theta\in\Sp^{n-1}$ and $y_1,\ldots,y_N\in\theta^{\perp}$.
    Let $(s_1,\ldots,s_N), (t_1,\ldots,t_N)\in \mathbb{R}^N$ and
    $\tau\in (0,1)$.  For $i=1,\ldots,N$, write $$y_i+(\tau s_i +
    (1-\tau)t_i)\theta = (y_i+t_i\theta)+\tau(t_i-s_i)\theta$$ and
    apply Proposition \ref{prop:lps_EH} with $x_i=y_i+t_i\theta$,
    $\lambda_i=t_i-s_i$ and $t=\tau$ to obtain the convexity in
    $\tau$.  Lastly, the sets
        \[
            \oplus_C(R_{\rho_1}(y_1+t_1\theta),\ldots,R_{\rho_N}(y_N+t_N\theta))
        \]
    and
        \[
            \oplus_C(R_{\rho_1}(y_1-t_1\theta),\ldots,R_{\rho_N}(y_N-t_N\theta))
        \]
    are reflections of one another and so the evenness condition
    required for Steiner convexity holds.
\end{proof}

Next, we state the analogous proposition involving the line segments
\eqref{Ray-Hypo}; the proof follows the same line.

\begin{Prop}
  \label{SC-of-rays-with-fixed-height-hypo}
  Let $\rho_1,\ldots,\rho_N\in\R$ and $C$ a compact convex set
  contained in the positive orthant. Then the function
  $F:(\R^n)^N\rightarrow [0,\infty)$ defined by
    \begin{equation}
    \label{Hypo-Rays-Functional}
        F(x_1,\ldots,x_N)
        =
        \num{
        \oplus_C(
        \widetilde{R}_{\rho_1}(x_1),\ldots, \widetilde{R}_{\rho_N}(x_N)
        )
        }
    \end{equation}
  is Steiner convex.
\end{Prop}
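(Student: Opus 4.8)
The plan is to mimic the proof of Proposition \ref{SC-of-rays-with-fixed-heights-epi} verbatim, substituting the line segments $\widetilde{R}_{\rho_i}$ for the rays $R_{\rho_i}$ and the convexity of $t\mapsto\nu(H^s_t)$ for that of $t\mapsto\nu(E^s_t)$ in Proposition \ref{prop:lps_EH}. First I would fix $\theta\in\Sp^{n-1}$, points $y_1,\ldots,y_N\in\theta^{\perp}$, vectors $(s_1,\ldots,s_N),(t_1,\ldots,t_N)\in\R^N$ and $\tau\in(0,1)$, and for each $i$ write
\[
y_i+(\tau s_i+(1-\tau)t_i)\theta = (y_i+t_i\theta)+\tau(s_i-t_i)\theta.
\]
Then I would apply the hypograph half of Proposition \ref{prop:lps_EH} with $x_i=y_i+t_i\theta$, $\lambda_i=s_i-t_i$, $z_i=\rho_i$ (more precisely, whatever heights give $\rho_i$ under $\rho_i(s)$; since the $\rho_i$ are prescribed directly, one uses $C$ and the segments $\widetilde{R}_{\rho_i}$ as in \eqref{Ray-Hypo}), and parameter running over $[0,1]$ evaluated at $t=\tau$. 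This yields convexity of $\tau\mapsto\num{\oplus_C(\widetilde{R}_{\rho_1}(y_1+\tau s_1\theta+\cdots),\ldots)}$ along the segment from $(t_1,\ldots,t_N)$ to $(s_1,\ldots,s_N)$, which is exactly the quasi-convexity (in fact convexity) of $F_{Y,\theta}$ on every affine line, hence quasi-convexity of $F_{Y,\theta}$ on $\R^N$.

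For the evenness condition, I would observe that replacing each $t_i$ by $-t_i$ replaces every segment $\widetilde{R}_{\rho_i}(y_i+t_i\theta)$ by $\widetilde{R}_{\rho_i}(y_i-t_i\theta)$, and since $y_i\in\theta^{\perp}$ the two configurations $\{y_i+t_i\theta\}$ and $\{y_i-t_i\theta\}$ are mirror images across $\theta^{\perp}$; the $\oplus_C$ operation commutes with the isometry $(x,r)\mapsto(-P_{\theta^\perp}x + \langle x,\theta\rangle\theta, r)$ wait—more simply, the reflection in the hyperplane $\theta^\perp\times\R$ of $\R^n\times[0,\infty)$ carries the first $\oplus_C$-combination onto the second, and $\nu$ (being a product of Lebesgue measure in the first $n$ coordinates with $h(x_{n+1})\,dx_{n+1}$) is invariant under that reflection. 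Hence $F_{Y,\theta}(-t_1,\ldots,-t_N)=F_{Y,\theta}(t_1,\ldots,t_N)$, so $F_{Y,\theta}$ is even and quasi-convex, i.e.\ $F$ is Steiner convex.

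The only point needing a word of care — and the closest thing to an obstacle — is matching the hypothesis of Proposition \ref{prop:lps_EH}, which is stated for heights of the form $\rho_i(s)=z_i^s$ (or $-\log z_i$) coming from an $s$-concave function, with the present setting where $\rho_1,\ldots,\rho_N\in\R$ are arbitrary prescribed reals. In fact the proof of Proposition \ref{prop:lps_EH} in the case $s<0$ only uses that the $\rho_i$ are real numbers and that $\oplus_C(\{\widetilde{R}_{\rho_i}(x_i+\lambda_i t\theta)\})$ is a compact convex set whose slice by $\pi_z$ is a linear parameter system of convex sets — nothing about $s$-concavity of an ambient function is used there. So I would either invoke Proposition \ref{prop:lps_EH} directly (reading its proof as applying to any real heights, since $C\subseteq$ positive orthant guarantees convexity of the $\oplus_C$-combination of line segments) or, to be safe, briefly reprove the one-line slicing argument: write $\nu(\oplus_C(\widetilde{R}_{\rho_i}(\cdot)))=\int_\R|\oplus_C(\widetilde R_{\rho_i}(\cdot))\cap\pi_z|\,h(z)\,dz$, note that for fixed $z$ the slice is $\{\sum c_i x_i + (\sum c_i\lambda_i)t\theta : c\in C,\ \sum c_i r_i = z,\ 0\le r_i\le\rho_i\}$, a linear parameter system indexed by $C$, and conclude convexity in $t$ from \eqref{eqn:RS_convex}. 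Either way the proof is essentially the same one page already written for the epigraph case, which is why the statement can be left with the remark that ``the proof follows the same line.''
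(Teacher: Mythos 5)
Your proposal is correct and is exactly the argument the paper intends: the paper states only that ``the proof follows the same line'' as Proposition~\ref{SC-of-rays-with-fixed-heights-epi}, and you carry this out, replacing the rays $R_{\rho_i}$ by the segments $\widetilde R_{\rho_i}$ and invoking the hypograph part of Proposition~\ref{prop:lps_EH} (with the correct sign $\lambda_i=s_i-t_i$, fixing the paper's small typo). Your side remark is also right that the proof of Proposition~\ref{prop:lps_EH} only uses that the heights $\rho_i(s)$ are fixed reals, not their specific form $z_i^s$ or $-\log z_i$, so there is no real gap in applying it with prescribed $\rho_i\in\R$ — and indeed the same point is implicit in the paper's own proof of the epigraph case.
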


%
%

\section{Main proof}

\begin{proof}[Proof of Theorem~\ref{Extended-Stochastic-PL}]
Let $w_i=(x_i,z_i)\in \mathbb{R}^n\times\mathbb{R}$ for
$i=1,\ldots,M+N$.  For $s\geq 0$, let $T^{(N)}_{\{w_i\},s}$ and
$T^{(M)}_{\{w_i\},s}$ be the least $s$-concave functions above the
collections $\{w_i\}_{i\leq N}$ and $\{w_i\}_{N+1\leq i\leq M}$,
respectively; similarly, when $s<0$, let $T^{(N)}_{\{w_i\}}$ and
$T^{(M)}_{\{w_i\}}$ be the greatest $s$-concave functions beneath the
respective collections $\{w_i\}_{i\leq N}$ and $\{w_i\}_{N+1\leq i\leq M}$. With
this notation, we set
\begin{equation*}
  F(w_1,\ldots,w_{N+M})=\int_{\R^n}T^{(N)}_{\{w_i\}}\star_{\lambda}
  T^{(M)}_{\{w_i\}}(v)dv.
\end{equation*}
Let $f,g:\R^n\to[0,\infty)$ be integrable $s$-concave functions for
  $s\in(-1/n,\infty)$. Sample independent random vectors
  $W_i=(X_i,Z_i)$, $i=1,\dots,N+M$ uniformly according to the Lebesgue
  measure on $G_f$ for $i=1,\ldots,N$ and $G_g$ for
  $i=N+1,\ldots,N+M$. Then the random functions $[f]_{N}$, $[g]_{M}$
  satisfy
    \begin{eqnarray}
        \nonumber
        \lefteqn{\Pro\left(
        \int_{\R^n}[f]_N\star_{\lambda,s}[g]_M(v)
        dv>\alpha
        \right)}\\
        \label{Prob-Funct}
        & & = \frac{1}{\prod_{i=1}^{M+N}\norm{k_i}_1}
        \int_{N+M}\mathds{1}_{\{F>\alpha\}}(\overline{w})
        \prod\limits_{i=1}^{N+M}\mathds{1}_{[0,k_i(x_i)]}(z_i) d\overline{w},
    \end{eqnarray}
where $\int_{N+M}$ is the integral on $(\R^n\times[0,\infty))^{N+M}$, $k_i=f$ for $i=1,\ldots, N$, $k_i=g$ for $i=N+1,\ldots,N+M$, and
    \begin{equation}
    \label{eqn:w}
      \overline{w}=(w_1,\ldots,w_{N+M}), \quad d\overline{w} = dw_1\ldots
      dw_{N+M}.
    \end{equation}
Also we write $C_N=\conv{e_1,\dots,e_N}$ and
$\widehat{C}_M=\conv{e_{N+1},\dots,e_{N+M}}$, and consider $\nu$ as in
\eqref{Integral-Measure} for each case.  \\[.5em]
\noindent{\bf Case $\mathbf{s>0}$:}
By \eqref{Random-Hypograph} and \eqref{Convexity-Hypograph} it follows
    \begin{align*}
        H(([f]_{N}\star_{\lambda,s}[g]_{M})^s)
        &=
        \oplus_{C_N}
        (\{\widetilde{R}_{Z^s_i}(X_i)\}_{i=1}^{N})
        +_{\lambda}
        \oplus_{\widehat{C}_M}
        (\{\widetilde{R}_{Z^s_{i}}(X_{i})\}_{i=N+1}^{N+M})\\[.5em]
        &=
        \oplus_{C_N+_{\lambda}\widehat{C}_M}
        (\{\widetilde{R}_{Z^s_{i}}(X_{i})\}_{i=1}^{N+M}),
    \end{align*}
hence
    \begin{equation}
    \label{Eqn-Int-Rays-Tilde}
        \int_{\R^n}[f]_N\star_{\lambda,s}[g]_M(v) dv
        =
        \num{\oplus_{C_N+_{\lambda}\widehat{C}_M}
        (\{\widetilde{R}_{Z^s_{i}}(X_{i})\}_{i=1}^{N+M})}.
    \end{equation}
By \eqref{Prob-Funct}, Fubini, Proposition~\ref{SC-of-rays-with-fixed-heights-epi}, and Theorem \ref{RBLLC}, we have
\begin{eqnarray*}
  \lefteqn{\Pro\left(
        \int_{\R^n}[f]_N\star_{\lambda,s}[g]_M(v)
        dv>\alpha\right)}\\
        & &=
        \frac{1}{\prod_{i=1}^{M+N}\norm{k_i}_1^{N+M}}\displaystyle
  \int_{(\mathbb{R}^n\times[0,\infty))^{N+M}}\mathds{1}_{\{F>\alpha\}}(\overline{w})
    \prod\limits_{i=1}^{N+M}\mathds{1}_{[0,k_i(x_i)]}(z_i)
    d\overline{w}\\
        & & =
    \frac{1}{\prod_{i=1}^{M+N}\norm{k_i}_1^{N+M}}\int_{[0,\infty)^{N+M}}{\left(
        \int_{(\R^n)^{N+M}}{\mathds{1}_{\{F>\alpha\}}(\overline{w})
          \prod\limits_{i=1}^{N+M}{\mathds{1}_{[0,k_i(x_i)]}(z_i)}}d\overline{x}
        \right)}d\overline{z}\\
        & & \geq
      \frac{1}{\prod_{i=1}^{M+N}\norm{k_i^*}_1^{N+M}}\int_{[0,\infty)^{N+M}}{\left(
          \int_{(\R^n)^{N+M}}{\mathds{1}_{\{F>\alpha\}}(\overline{w})
            \prod\limits_{i=1}^{N+M}{\mathds{1}_{[0,k_i^*(x_i)]}(z_i)}}d\overline{x}
          \right)}d\overline{z}\\
        & & =
        \frac{1}{\prod_{i=1}^{M+N}\norm{k_i^*}_1^{N+M}}\displaystyle
        \int_{(\mathbb{R}^n\times[0,\infty))^N}\mathds{1}_{\{F>\alpha\}}(\overline{w})
          \prod\limits_{i=1}^{N+M}\mathds{1}_{[0,k_i^*(x_i)]}(z_i)
          d\overline{w}\\
        & & =
        \Pro\left(
        \int_{\R^n}[f^*]_N\star_{\lambda,s}[g^*]_M(v)
        dv>\alpha\right).
\end{eqnarray*}	

\hspace{1em}

\noindent{\bf Case $\mathbf{s=0}$:} Using \eqref{Random-Epigraphs}, \eqref{Convexity-Log-Epigraph}, we have
    \begin{equation*}
        E(-\log{([f]_{N}\star_{\lambda,0}[g]_{M})})
        =
        \oplus_{C_N+_{\lambda}\widehat{C}_M}
        (\{{R}_{-\log{Z_i}}(X_{i})\}_{i=1}^{N+M}),
    \end{equation*}
so
    \begin{equation}
    \label{Eqn-Int-Rays-Log}
        \int_{\R^n}[f]_N\star_{\lambda,0}[g]_M(v) dv
        =
        \num{\oplus_{C_N+_{\lambda}\widehat{C}_M}
        (\{{R}_{-\log{Z_i}}(X_{i})\}_{i=1}^{N+M})}.
    \end{equation}
It follows as before
\begin{equation*}
    \Pro
    \left(
    \int_{\R^n}[f]_N\star_{\lambda,0}[g]_M(v)dv>\alpha
    \right)
    \geq
    \Pro
    \left(
    \int_{\R^n}[f^*]_N\star_{\lambda,0}[g^*]_M(v)dv>\alpha
    \right).
\end{equation*}	

\hspace{1em}

\noindent{\bf Case $\mathbf{s<0}$:} It follows by \eqref{Random-Epigraphs} and \eqref{Convexity-Log-Epigraph} that
    \begin{equation*}
        E(([f]_{N}\star_{\lambda,s}[g]_{M})^s)
        =
        \oplus_{C_N+_{\lambda}\widehat{C}_M}
        (\{R_{Z^s_i}(X_{i})\}_{i=1}^{N+M}),
    \end{equation*}
hence
    \begin{equation}
    \label{Eqn-Int-Rays-s}
        \int_{\R^n}[f]_N\star_{\lambda,s}[g]_M(v) dv
        =
        \num{\oplus_{C_N+_{\lambda}\widehat{C}_M}
        (\{R_{Z^s_i}(X_{i})\}_{i=1}^{N+M})}.
    \end{equation}
It follows as before
\begin{equation*}
    \Pro
    \left(
    \int_{\R^n}[f]_N\star_{\lambda,s}[g]_M(v)dv>\alpha
    \right)
    \geq
    \Pro
    \left(
    \int_{\R^n}[f^*]_N\star_{\lambda,s}[g^*]_M(v)dv>\alpha
    \right).
\end{equation*}	
\end{proof}

\begin{Rem}
  \label{rem:C}
We have applied Propositions \ref{SC-of-rays-with-fixed-heights-epi}
and \ref{SC-of-rays-with-fixed-height-hypo} only in the special case
when $C=C_N +_{\lambda}\widehat{C}_M$. Since these propositions apply
to more general convex sets $C$, they can be used to treat alternate
means and different stochastic functions in Theorem
\ref{Extended-Stochastic-PL}.  This direction and its geometric
implications are outside of our present scope but will appear in a
forthcoming work of the authors.
\end{Rem}
%
%

\addcontentsline{toc}{section}{References} \bibliographystyle{plain}
\bibliography{biblio}

\end{document}